\documentclass[11pt]{amsart}

\usepackage{hyperref}
\usepackage{amsfonts,amssymb,amscd,amsmath,amsthm,enumerate,mathrsfs}
\newcommand{\abs}{\vspace{12pt}}


\DeclareMathOperator{\const}{const.}

\DeclareMathOperator{\diag}{diag}

\DeclareMathOperator{\Hess}{Hess}
\DeclareMathOperator{\card}{card}
\DeclareMathOperator{\Span}{span}

\DeclareMathOperator{\diam}{diam}

\DeclareMathOperator{\la}{\langle}
\DeclareMathOperator{\ra}{\rangle}

\newcommand{\R}{\mathbb{R}}
\newcommand{\Z}{\mathbb{Z}}

\newcommand{\N}{\mathbb{N}}

\newcommand{\MM}{\mathfrak{M}}

\newcommand{\U}{\mathcal{U}}

\renewcommand{\O}{\mathcal{O}}
\renewcommand{\P}{\mathcal{P}}

\newcommand{\M}{\mathcal{M}}

\newcommand{\e}{\varepsilon}

\newcommand{\gam}{\gamma}
\newcommand{\Gam}{\Gamma}
\newcommand{\al}{\alpha}

\newcommand{\lam}{\lambda}


\theoremstyle{plain}
\newtheorem{defn}{Definition}[section]
\newtheorem{lemma}[defn]{Lemma}

\newtheorem{thm}[defn]{Theorem}
\newtheorem{cor}[defn]{Corollary}

\theoremstyle{definition}
\newtheorem{remark}[defn]{Remark}

\newcommand{\mane}{Ma\~n\'e}
\newcommand{\Mane}{Ma\~n\'e }



\begin{document}

\hypersetup{pdftitle = {Uniqueness of shortest closed geodesics for generic Finsler metrics}, pdfauthor = {Jan Philipp Schr\"oder}}

\title[Shortest closed geodesics for generic Finsler metrics]{Uniqueness of shortest closed geodesics \\ for generic Finsler metrics}
\author[J. P. Schr\"oder]{Jan Philipp Schr\"oder}
\address{Faculty of Mathematics \\ Ruhr University \\ 44780 Bochum \\ Germany}
\email{\url{jan.schroeder-a57@rub.de}}
\keywords{Finsler metrics, closed geodesics, generic in the sense of \mane}

\begin{abstract}
In every conformal class of Finsler (or Riemannian) metrics on a closed manifold there exists a residual subset of Finsler metrics, such that, with respect to the residual Finsler metrics, in any non-trivial homotopy class of free loops there is precisely one shortest geodesic loop.
\end{abstract}

\maketitle


\section{Introduction and main result}

We consider Finsler metrics on closed manifolds, which can be thought of as a norm in each tangent space, while we do not assume reversibility, i.e. symmetry with respect to $v\mapsto -v$. Every Riemannian metric induces of course a Finsler metric, while it is also well-known that one can study the much more general Tonelli Lagrangian systems using Finsler metrics, cf. e.g. \cite{contreras}.

Let $M$ denote a closed manifold, i.e. compact, connected and $\partial M=\emptyset$. 

\begin{defn}\label{def finsler}
A function $F:TM\to\R$ is a \emph{Finsler metric} on $M$, if the following conditions are satisfied:
\begin{enumerate}
\item (smoothness) $F$ is $C^\infty$ at every $v\in TM, v\neq 0$,

\item (positive homogeneity) $F(a v) = a F(v)$ for all $v\in TM, ~ a \geq 0$,

\item (strict convexity) the fiberwise Hessian $\Hess(F^2|_{T_xM})$ of the square $F^2$ is positive definite at every vector $v\in T_xM-\{0\}$ for all $x\in M$.
\end{enumerate}
\end{defn}

We fix for the rest of the paper a Finsler metric $F$ on $M$.

In \cite{mane}, R. \Mane introduced a concept of genericity for Lagrangian systems, namely perturbing a given Lagrangian $L:TM\to\R$ by a potential function $V:M\to\R$ and considering the new Lagrangian $L' = L+V$. Properties of such generic Lagrangians have been studied in the literature, e.g. in \cite{bercont}. In the setting of Finsler metrics, we use a different notion of genericity, which is related to the before mentioned one via Maupertuis' principle. We write
\[ E := C^\infty(M)=C^\infty(M,\R), \quad E_+ := \{\lam \in E : \lam(x)>0 ~ \forall x \in M\} \]
and endow both sets with the $C^\infty$-topology.

\begin{defn}
We say that a property $\P$ of the Finsler metric $F$ is \emph{conformly generic}, if there exists a residual set $\O\subset E_+$ (i.e. $\O$ is the countable intersection of open and dense subsets of $E_+$), such that $\P$ holds for every Finsler metric of the form $\sqrt \lam\cdot F, ~ \lam\in \O$.
\end{defn}

\begin{remark}
\begin{itemize}
\item Note that the following two properties are left invariant under the perturbation $\sqrt \lam\cdot F$. (1) reversibility: $F(-v)=F(v)$ for all $v\in TM$. (2) Riemannian: $F(v)=\sqrt{g(v,v)}$ for a Riemannian metric $g$ on $M$.

\item Countable intersections of residual sets are again residual and, as $E$ is a Fr\'echet space, residual sets are dense in $E_+$.
\end{itemize}
\end{remark}

We denote by $\Gam(M)$ the set of all non-trivial homotopy classes of free, piecewise $C^1$ loops $c:\R/\Z\to M$. A classical way to obtain closed geodesics in $M$ is minimization of the $F$-length $l_F(c)=\int F(\dot c) dt$ in the homotopy classes $\gam\in \Gam(M)$. Namely, for all $\gam\in\Gam(M)$, the infimum $\inf_{c\in\gam} l_F(c)$ exists and is attained at closed geodesics in $\gam$ (cf. e.g. Theorem 8.7.1 (2) in \cite{bao}).

In the following, when we speak of $F$-geodesics $c:[a,b]\to M$, we assume that $c$ is parametrized such that $F(\dot c)=\const$. Also, if $c(t)$ is a geodesic and $t_0\in\R$, then we think of $c(t+t_0)$ as the same geodesic. We prove here the following theorem.

\begin{thm}\label{main thm}
Let $F$ be a Finsler metric on a closed manifold $M$, such that the set $\Gam(M)$ is countable. Then the following property is conformly generic: $F$ has in each non-trivial, free homotopy class $\gam\in\Gam(M)$ precisely one shortest closed geodesic.
\end{thm}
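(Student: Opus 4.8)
The plan is a Baire category argument, carried out one free homotopy class at a time. First I would observe that replacing $F$ by $\sqrt\lam\cdot F$ is the same as replacing $F$ by $\rho\cdot F$ with $\rho=\sqrt\lam\in E_+$, and that $\lam\mapsto\sqrt\lam$ is a homeomorphism of $E_+$ in the $C^\infty$-topology; so it suffices to produce a residual set of weights $\rho\in E_+$ for which $\rho F$ has a unique shortest closed geodesic in every $\gam\in\Gam(M)$. The gain is that length becomes \emph{linear} in the weight: writing $\mu_c$ for the finite positive Borel measure on $M$ with $\int_M f\,d\mu_c=\int_0^1 f(c(t))\,F(\dot c(t))\,dt$, one has $l_{\rho F}(c)=\int_M\rho\,d\mu_c$. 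Hence, for fixed $\gam$, the minimal length $m_\gam(\rho):=\inf_{c\in\gam}l_{\rho F}(c)$ is an infimum of affine functionals of $\rho$, so it is concave and continuous on $E_+$. Moreover $\mu_c$ determines $c$ up to the identification $c(t)\sim c(t+t_0)$: its support is the image $c(\R/\Z)$, its total mass is the $F$-length of $c$, and a geodesic is pinned down by its image together with its constant speed (it follows the image ``straight through'' self-intersections). Since $\Gam(M)$ is countable and a countable intersection of residual sets is residual, it is enough to treat a single class $\gam$.

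Fix $\gam$ and a countable family $\psi_1,\psi_2,\dots\in E$ that is dense in $C(M)$ for the uniform norm, so that a finite Borel measure on $M$ is determined by the numbers $\int_M\psi_i\,d\mu$. For $i,n\in\N$ let $\O_{\gam,i,n}\subset E_+$ be the set of $\rho$ such that any two shortest closed geodesics $c_1,c_2$ of $\rho F$ in $\gam$ satisfy $|\int_M\psi_i\,d\mu_{c_1}-\int_M\psi_i\,d\mu_{c_2}|<1/n$. A standard compactness argument shows each $\O_{\gam,i,n}$ is open: shortest loops in $\gam$ for $\rho$ near a given weight have uniformly bounded length, hence bounded speed, so by Arzel\`a--Ascoli and the smooth dependence of geodesics on the metric and initial data, any limit of shortest closed geodesics for $\rho_k\to\rho$ is again a shortest closed geodesic for $\rho$ (using continuity of $m_\gam$ and lower semicontinuity of length); thus the closed condition ``gap $\ge 1/n$'' passes to limits, so the complement of $\O_{\gam,i,n}$ is closed.

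For density I would fix $\rho_0\in E_+$, $\e>0$ and $i$, and consider $s\mapsto m_\gam(\rho_0+s\psi_i)$ on a small interval around $0$; it is concave, hence differentiable outside a countable set. Choose $s^*\ne 0$ small enough that $\rho^*:=\rho_0+s^*\psi_i$ is $\e$-close to $\rho_0$ and lies in $E_+$, and such that $s\mapsto m_\gam(\rho_0+s\psi_i)$ is differentiable at $s^*$. For any shortest closed geodesic $c$ of $\rho^* F$ in $\gam$, the affine function $s\mapsto l_{(\rho_0+s\psi_i)F}(c)=l_{\rho_0 F}(c)+s\int_M\psi_i\,d\mu_c$ lies above $s\mapsto m_\gam(\rho_0+s\psi_i)$ and touches it at $s^*$; differentiability at $s^*$ forces its slope $\int_M\psi_i\,d\mu_c$ to equal the common derivative, independently of $c$. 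Hence $\rho^*\in\O_{\gam,i,n}$ for every $n$, so each $\O_{\gam,i,n}$ is open and dense, $\O_{\gam,i}:=\bigcap_n\O_{\gam,i,n}$ is residual, and so is $\O_\gam:=\bigcap_i\O_{\gam,i}$. For $\rho\in\O_\gam$, all shortest closed geodesics of $\rho F$ in $\gam$ assign the same value to every $\int_M\psi_i\,d\mu$, hence have the same occupation measure, hence coincide; together with the existence of a shortest closed geodesic this gives exactly one. Finally $\bigcap_{\gam\in\Gam(M)}\O_\gam$ is residual and works simultaneously for all classes, which yields the residual set $\O$ of the theorem.

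The routine parts are the compactness and concavity bookkeeping. The two genuinely delicate points, which I would expect to need the most care, are: (i) the continuity package behind openness of $\O_{\gam,i,n}$, in particular upper semicontinuity of the set of shortest closed geodesics in the weight $\rho$; and (ii) the implication ``equal occupation measure $\Rightarrow$ equal geodesic'', which requires attention to self-intersecting minimizers, to minimizers that are nontrivial iterates of shorter geodesics, and — for reversible metrics — to orientation, and which is the step I anticipate as the main obstacle.
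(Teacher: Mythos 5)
Your proposal is correct in its essentials, but it takes a genuinely different route from the paper. The paper does not use a concavity/differentiability argument at all. Instead it convexifies: it forms, for each free homotopy class $\gam$ and each $b>0$, the compact convex set $K_\gam^b$ of probability measures on $T_{\le b}M$ in the closed convex hull of occupation measures of loops of class $\gam$, works with the \emph{squared} Finsler metric $A_F(\mu)=\int F^2\,d\mu$ (which is linear in the weight $\lam$ under $F\mapsto\sqrt\lam F$, without passing to $\rho=\sqrt\lam$), and shows via the Cauchy--Schwarz inequality that shortest closed geodesics give minimizers of $A_F$ on $K_\gam^b$. It then pushes the measures to $M$ (weighted by $F^2$) and invokes an abstract Ma\~n\'e-type theorem, whose proof rests on mapping the compact convex minimizing set into $\R^n$ by finitely many separating functionals and selecting an exposed point via Straszewicz's theorem.

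Your argument avoids both the Cauchy--Schwarz/energy step and the convexification: by observing that $l_{\rho F}(c)=\int\rho\,d\mu_c$ is already linear in $\rho$, you get concavity of $m_\gam$ for free, and then the classical one-parameter trick (a concave function on an interval is differentiable off a countable set; at a differentiability point of $s\mapsto m_\gam(\rho_0+s\psi_i)$, every minimizer's supporting affine function must have the common slope $\int\psi_i\,d\mu_c$) produces density of $\O_{\gam,i,n}$ directly. This is more in the spirit of Mather/Bangert-type perturbation arguments than Ma\~n\'e's exposed-point lemma; it is arguably more elementary and avoids having to verify the hypotheses of an abstract theorem, at the cost of being tied to one-dimensional perturbations (which is harmless here). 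The openness argument via Arzel\`a--Ascoli and $C^\infty$-dependence of geodesics on the metric is a substitute for the paper's upper semicontinuity of $f\mapsto\diam\M(f)$, and it is the right substitute since you forgo the compact convex set $K$ on which the paper's functional is defined.

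The one step you flag as the ``main obstacle'' --- that equal occupation measures on $M$ force the shortest closed geodesics to coincide --- is also present in the paper, which dispatches it rather tersely (bump functions give equal images, ``hence the $c_i$ are reparametrizations of each other''). Neither your sketch nor the paper fully spells out why equal image plus constant speed determines the loop in the presence of self-intersections, nor do they address the reversible, $\gam=\gam^{-1}$ case where a minimizer and its time-reversal are distinct loops with the same image; so you are at no disadvantage here. In short: a valid alternative proof, not the paper's.
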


\abs

{\bf Structure of this paper.} We will approach the proof of Theorem \ref{main thm} through an application of a result due to R. \Mane from \cite{mane}. To keep the paper self-contained, we prove the special case used in this paper in Section \ref{appendix A}. In Section \ref{section free loops}, we study probability measures concentrated on loops of a given homotopy class and prove Theorem \ref{main thm}.

\section{An abstract result due to R. \Mane}\label{appendix A}

In this section, we prove an abstract result, following the ideas of R. \Mane (Section 3 in \cite{mane}). We work in the following setting, where all vector spaces are real vector spaces.
\begin{itemize}
\item $E$ is a Hausdorff topological vector space and has the following property: if $\phi:E\to \R$ is such that $\limsup \phi(f_n)\leq \phi(f)$, if $f_n\to f$, then the sublevels $\{\phi < c\}\subset E$ are open for all $c\in\R$,

\item $V$ is a topological vector space and $K\subset V$ is non-empty, compact, convex and metrizable,

\item $\varphi : E \times K \to \R$ is bilinear, sequentially continuous and seperates $K$ in the sense that
\[ \forall x,y\in K, x\neq y ~\exists f\in E: \quad \varphi(f,x)\neq \varphi(f,y). \]
\end{itemize}

We set
\begin{align*}
  m(f) & := \min \varphi(f, . )|_K, \\
 \M(f) & := \{ x\in K : \varphi(f,x) = m(f) \}.
\end{align*}

As a first observation, we have the following continuity property.

\begin{lemma}\label{M(f) semi-cont}
The map $f\in E \mapsto m(f)\in \R$ is sequentially continuous. Moreover, let $f_n \to f$ in $E$. Then for all sequences $x_n\in \M(f_n)$, any limit point $x$ of $\{x_n\}\subset K$ belongs to $\M(f)$.
\end{lemma}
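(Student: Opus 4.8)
The plan is to run the standard compactness argument for the (semi)continuity of a minimum depending on a parameter, using that $K$ is compact and metrizable so that we may freely pass to convergent subsequences, and that $\varphi$ is jointly sequentially continuous so that we may pass to the limit in $\varphi(f_n,x_n)$ when \emph{both} arguments move. First I would record that for each fixed $f\in E$ the function $\varphi(f,\cdot)$ is continuous on the compact metrizable space $K$ (sequential continuity is enough on a metrizable space), so the minimum defining $m(f)$ is attained and $\M(f)\neq\emptyset$; this is the only thing needed to make the statement meaningful.

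Next I would prove the easy half-estimate. Given $f_n\to f$, pick any $x_0\in\M(f)$. Then $m(f_n)\le\varphi(f_n,x_0)\to\varphi(f,x_0)=m(f)$ by sequential continuity of $\varphi$ in its first argument, so $\limsup_n m(f_n)\le m(f)$. The reverse inequality and the second assertion I would extract simultaneously: let $f_n\to f$, choose $x_n\in\M(f_n)$, and let $x$ be any limit point of $(x_n)$. By compactness of $K$ we have $x\in K$, and by metrizability $x$ is the limit of a subsequence $x_{n_k}\to x$ in $K$ (hence in $V$). Joint sequential continuity of $\varphi$ gives $m(f_{n_k})=\varphi(f_{n_k},x_{n_k})\to\varphi(f,x)$. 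Since $x\in K$ we have $\varphi(f,x)\ge m(f)$; combined with $\limsup_n m(f_n)\le m(f)$ this forces $\varphi(f,x)=m(f)$, i.e. $x\in\M(f)$, which is the second claim. Applying this to a subsequence of $(m(f_n))$ realizing $\liminf_n m(f_n)$ (and the corresponding minimizers) yields $\liminf_n m(f_n)=\varphi(f,x)=m(f)$, so together with the $\limsup$ bound we conclude $m(f_n)\to m(f)$, i.e. $m$ is sequentially continuous.

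The only point requiring care is the meaning of ``$\varphi$ is sequentially continuous'': it must be read as \emph{joint} sequential continuity on $E\times K$ (equivalently on $E\times V$), since the argument passes to the limit in $\varphi(f_n,x_n)$ with both entries varying; metrizability of $K$ is what makes ``limit point'' synonymous with ``subsequential limit'' and supplies the convergent subsequence in the first place. Beyond identifying this, I do not expect any real obstacle: everything reduces to the elementary fact that a parametrized minimum over a fixed compact set, with continuous dependence on the parameter, is continuous, and that the argmin set is ``upper semicontinuous'' in the Kuratowski sense.
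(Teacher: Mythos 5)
Your proof is correct and follows essentially the same argument as the paper: fix $x_0\in\M(f)$ to get $\limsup m(f_n)\le m(f)$, then pass to a convergent subsequence of minimizers $x_n\in\M(f_n)$ and use joint sequential continuity of $\varphi$ to obtain both the reverse inequality and the membership $x\in\M(f)$. The only difference is a minor reordering (you conclude $x\in\M(f)$ before finishing the continuity of $m$, whereas the paper does the reverse), and your remark that ``sequentially continuous'' must be read as joint continuity is exactly the reading the paper's proof uses.
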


\begin{proof}
If $x\in \M(f)$, then by the continuity of $\varphi$ 
\[ m(f) = \varphi(f,x) = \lim \varphi(f_n,x) \geq \limsup m(f_n). \]
Let $x_n\in\M(f_n)$, then after passing to a subsequence, assume that $x_n\to x$ in $K$. Then we have
\begin{align}\label{m cont}
 m(f_n) = \varphi(f_n,x_n) \to \varphi(f,x) \geq m(f),
\end{align}
showing that $\liminf m(f_n) \geq m(f)$. The first claim follows. Moreover, using $m(f_n)\to m(f)$, \eqref{m cont} shows that $x=\lim x_n\in \M(f)$.
\end{proof}

\begin{thm}[\mane]\label{thm mane}
There exists a residual subset $\O \subset E$ (i.e. a countable intersection of open and dense subsets), such that
\[ f\in \O \quad\implies\quad \card\M(f)=1. \]
\end{thm}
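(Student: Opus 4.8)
The plan is to follow \Mane's original strategy, which is a soft-analysis argument built around convexity and a Baire-category count. The starting point is the observation that $m$, being the minimum of a family of continuous linear functionals $\varphi(\cdot,x)$ indexed by $x\in K$, is concave and sequentially continuous on $E$ (sequential continuity is Lemma \ref{M(f) semi-cont}). Concavity is the crucial structural fact: a concave function on a vector space is differentiable at $f$ in the direction $g$ from the right and from the left, with one-sided derivatives $D^+_g m(f) \le D^-_g m(f)$, and the directional derivative exists iff these agree. The heuristic is that $m$ should be differentiable at ``most'' $f$, and that differentiability of $m$ at $f$ should force $\M(f)$ to be a singleton, because $\M(f)$ is exactly the set of ``supporting functionals'' (subgradients) of the concave function $m$ at $f$, transported through $\varphi$.

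The key steps, in order. First I would fix a countable family $\{g_k\}$ that ``detects'' distinctness of points in $K$: since $\varphi$ separates $K$ and $K$ is compact metrizable, the continuous functions $x\mapsto \varphi(g,x)$ separate points of $K$, and by compactness one can extract a countable subfamily $g_1,g_2,\dots$ in $E$ whose induced functions still separate $K$ (e.g.\ using separability of $C(K)$). Second, for each $k$ I would define $\O_k \subset E$ to be the set of $f$ at which the one-sided directional derivatives of $m$ in the direction $g_k$ coincide, i.e.\ $D^+_{g_k} m(f) = D^-_{g_k} m(f)$; equivalently, the function $t\mapsto m(f+tg_k)$, which is a concave function of one real variable, is differentiable at $t=0$. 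Third, I would show each $\O_k$ is residual in $E$. The density part uses that a concave function of one real variable is differentiable outside a countable set, so along any line $f_0 + \R g_k$ all but countably many points lie in $\O_k$, giving density; the $G_\delta$ part I would get by writing $\O_k = \bigcap_{n} \{f : D^+_{g_k}m(f) - D^-_{g_k}m(f) > -1/n\}$ — wait, more carefully: $\O_k = \bigcap_n \{ f : \text{(difference quotient gap)} < 1/n\}$, where the gap $\Delta_k(f) := D^-_{g_k}m(f) - D^+_{g_k}m(f) \ge 0$ is upper semicontinuous for the sequential topology (this uses sequential continuity of $m$ and the fact that one-sided derivatives are infima/suprema of difference quotients over rational steps), so $\{\Delta_k < 1/n\}$ is sequentially open, and by the hypothesis on $E$ (upper-semicontinuous-along-sequences implies sublevels open) it is open. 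Finally I would set $\O := \bigcap_k \O_k$, a countable intersection of residual sets, hence residual, and check that $f\in\O \implies \card\M(f)=1$: if $x,y\in\M(f)$ then for every $k$, differentiability of $t\mapsto m(f+tg_k)$ at $0$ together with $m(f+tg_k)\le \varphi(f+tg_k,x) = m(f)+t\varphi(g_k,x)$ and the analogous inequality for $y$ pins the derivative to equal both $\varphi(g_k,x)$ and $\varphi(g_k,y)$, so $\varphi(g_k,x)=\varphi(g_k,y)$ for all $k$, whence $x=y$ by the separation property of the family $\{g_k\}$.

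The main obstacle I anticipate is the topological bookkeeping forced by the weak hypothesis on $E$: we are only given that functions which are upper semicontinuous \emph{along convergent sequences} have open sublevels, so I cannot freely invoke genuine (net) continuity or metrizability of $E$. Concretely, I must express $\O_k$ and its complement in terms that interact only with sequences — e.g.\ writing the one-sided derivative as a countable infimum $D^+_{g_k}m(f) = \inf_{q\in\Q_{>0}} \tfrac{m(f+qg_k)-m(f)}{q}$ so that $f\mapsto D^+_{g_k}m(f)$ is a countable infimum of sequentially continuous functions, hence sequentially upper semicontinuous, and similarly arranging signs so the relevant gap function is sequentially upper semicontinuous and its sublevels therefore open. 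Verifying density of the open sets also needs a small argument: one shows density by intersecting with an arbitrary nonempty open $U$, picking $f_0\in U$, and noting that $f_0 + tg_k \in U$ for small $t$ while all but countably many such $t$ give differentiability; one must make sure this perturbation stays inside $U$, which is fine since scalar multiplication and addition are continuous in the TVS $E$. Everything else — concavity of $m$, the subgradient characterization of $\M(f)$, the one-variable convex-analysis facts — is routine, so the real work is packaging it to respect the sequential nature of the ambient topology.
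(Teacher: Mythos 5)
Your proposal is correct, but it takes a genuinely different route from the paper. The paper decomposes $\O = \bigcap_n \O_n$ with $\O_n = \{f : \diam\M(f) < 1/n\}$; it proves openness of $\O_n$ from the sequential upper semicontinuity of $f\mapsto\diam\M(f)$ (Lemma \ref{M(f) semi-cont}), and proves density via Lemma \ref{lemma 3.3}, which in turn rests on Lemma \ref{lemma 3.2}: one embeds $K_0=\M(f)$ into $\R^n$ via a finite family $T_n = (\varphi(f_1,\cdot),\dots,\varphi(f_n,\cdot))$ chosen so that $T_n$ has small fibers, and then invokes Straszewicz's theorem to find an \emph{exposed} point $p$ of $T_n(K_0)$, so that a linear functional $v$ exposing $p$ pulls back to $f=\sum v_i f_i$ with $\M_0(f)=T_n^{-1}(p)$ small; the density argument then perturbs $f$ by $tg$ with $g$ chosen via Lemma \ref{lemma 3.2}. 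You instead extract a countable separating family $\{g_k\}$ once and for all (the same Lindel\"of argument that appears in the paper's footnote to Lemma \ref{lemma 3.2}), set $\O_k$ to be the set of $f$ at which the concave function $t\mapsto m(f+tg_k)$ is differentiable at $0$, get density from the elementary fact that a one-variable concave function is differentiable off a countable set, get openness from the sequential upper semicontinuity of the derivative gap (a countable infimum of difference quotients, each sequentially continuous), and deduce $\card\M(f)=1$ from the observation that every $x\in\M(f)$ yields a supporting line of slope $\varphi(g_k,x)$ at $0$, which differentiability pins to the unique value $\tfrac{d}{dt}\big|_{t=0}m(f+tg_k)$.

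What each approach buys: your route is more elementary in that it replaces Straszewicz's theorem by the classical almost-everywhere differentiability of one-variable concave functions, and it makes the mechanism behind uniqueness transparent via the subgradient picture (minimizers are slopes of supporting lines, and differentiability forces a unique slope). The paper's route is more self-contained at the level of sequences (no need to manipulate one-sided derivative formulas to see they are sequentially usc) and produces the quantitatively stronger statement that $\{\diam\M < 1/n\}$ is open and dense, which is sometimes useful in itself. One small caveat in your write-up: the parenthetical ``(e.g.\ using separability of $C(K)$)'' for extracting the countable separating family is not quite enough on its own; the clean justification is the Lindel\"of cover of $K\times K\setminus\Delta$, exactly as in the paper's footnote. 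With that substituted, the argument is complete.
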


In the following, we fix a metric $d$ on $K$, whose induced topology coincides with the original topology on $K$, rendering $(K,d)$ a compact metric space. For a subset $M\subset (K,d)$ we write
\[ \diam M = \sup\{d(x,y) : x,y \in M \}. \]

\begin{proof}
Set $\O_n := \{ f\in E : \diam \M(f) < 1/n \}$ and $\O=\bigcap_{n\in\N} \O_n$. The density of $\O_n$ will follow from Lemma \ref{lemma 3.3} below. Suppose now that $f, f_k\in E$ with $f_k\to f$. By Lemma \ref{M(f) semi-cont}, if $x_k,y_k\in \M(f_k)$ are chosen such that $\limsup d(x_k,y_k) = \limsup \diam \M(f_k)$, we find limits $x,y\in \M(f)$ with $d(x,y)\geq \limsup \diam \M(f_k)$. This shows that $f\mapsto \diam\M(f)$ is sequentially upper semi-continuous and by our assumption on $E$, it follows that the sets $\O_n$ are open.
\end{proof}

Given a closed, convex subset $K_0\subset K$ and $f\in E$, we set
\[ \M_0(f) := \{ x \in K_0 : \varphi(f,x) = \min \varphi(f,.)|_{K_0} \}. \]

\begin{lemma}\label{lemma 3.2}
If $K_0\subset K$ is closed and convex, then
\[ \forall \e>0 ~ \exists f\in E ~ : \quad \diam \M_0(f)\leq \e. \]
\end{lemma}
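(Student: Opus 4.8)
\textbf{Proof proposal for Lemma \ref{lemma 3.2}.}

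The plan is to produce, for a given $\e>0$, a single functional $f\in E$ whose minimizing set over $K_0$ has diameter at most $\e$, by combining finitely many ``separating'' functionals into one. First I would use the separation hypothesis on $\varphi$ together with compactness: the compact metric space $(K_0,d)$ can be covered by finitely many balls of radius $\e/2$, and for each pair of the corresponding centers $x_i\neq x_j$ there is a functional $g_{ij}\in E$ with $\varphi(g_{ij},x_i)\neq\varphi(g_{ij},x_j)$. More robustly, since $\varphi(\cdot,\cdot)$ is bilinear and sequentially continuous and $K_0$ is compact, I expect that for any two disjoint closed subsets $A,B\subset K_0$ one can find $g\in E$ with $\min\varphi(g,\cdot)|_A > \max\varphi(g,\cdot)|_B$ (or vice versa); this is the ``finite-dimensional Hahn--Banach on the image of $K_0$'' type statement, which I would isolate as the technical heart.

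Granting such separating functionals, the key step is an \emph{iterative slicing} argument. Start with $K_0^{(0)}=K_0$. If $\diam K_0^{(0)}\le\e$ we are done with $f=0$. Otherwise pick $x,y\in K_0^{(0)}$ with $d(x,y)>\e$, find $g_1$ with $\varphi(g_1,x)<\varphi(g_1,y)$, and pass to $K_0^{(1)}:=\M_0(g_1)$ with respect to $K_0^{(0)}$, which is again closed and convex (it is the face of $K_0^{(0)}$ where the affine function $\varphi(g_1,\cdot)$ attains its minimum) and is \emph{strictly smaller} — it misses $y$. The point is that this process must terminate in finitely many steps with a set of diameter $\le\e$: here I would either invoke a dimension/compactness argument (each slice drops the ``dimension'' of the relevant convex set, or more carefully, one uses a finite $\e/2$-net and argues that after finitely many slices at most one net point survives), and then read off that $f=\sum_k c_k g_k$ for suitable positive scalars $c_k$ has $\M_0(f)=K_0^{(N)}$, using that a minimizer of $\varphi(f,\cdot)$ over $K_0$ must minimize each $\varphi(g_k,\cdot)$ successively because the scalars can be chosen so that later terms are negligible perturbations of earlier ones (a lexicographic-style weighting).

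I expect the main obstacle to be making the termination rigorous without an a priori finite-dimensionality of $K$: the naive ``each slice lowers the dimension'' is clean when the image of $K_0$ under the evaluation pairing is finite-dimensional, but in general $K$ sits in an infinite-dimensional $V$. The clean fix is to fix a finite $\e/2$-net $\{p_1,\dots,p_m\}$ of $(K_0,d)$ first, and run the slicing only to separate net points that are more than $\e/2$ apart; since there are finitely many such pairs, after at most $\binom{m}{2}$ slices every surviving point of $K_0$ lies within $\e/2$ of a single net point, hence the final face has diameter $\le\e$. Assembling the finitely many $g_k$ into one $f$ with the right minimizing set is then the routine part: choose $c_1\gg c_2\gg\cdots\gg c_N>0$ so that on $K_0$ the functional $\varphi(c_1g_1+\cdots+c_Ng_N,\cdot)$ is minimized exactly on the nested intersection of the faces, using sequential continuity and compactness to get the required gaps between $\min$ and the next value on each face.
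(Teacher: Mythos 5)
Your overall plan (find finitely many separating functionals, combine them into one whose minimizing set is small) is the right circle of ideas, but two of the technical steps you sketched do not go through, and they are precisely the points where the paper does something different.

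First, the termination argument via a finite $\e/2$-net has a gap. You separate pairs of \emph{net points} $p_i\neq p_j$, obtaining $g_{ij}$ with $\varphi(g_{ij},p_i)\neq\varphi(g_{ij},p_j)$, and hope that after slicing, any surviving pair $x,y$ with $d(x,y)>\e$ is ruled out because its nearby net pair was separated. But separating $p_i,p_j$ does \emph{not} separate nearby $x,y$: the separation gap $|\varphi(g_{ij},p_i)-\varphi(g_{ij},p_j)|$ can be arbitrarily small compared with the oscillation of $\varphi(g_{ij},\cdot)$ over an $\e/2$-ball, so both $x$ and $y$ can still lie on the minimizing face of every $g_{ij}$. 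This is why the paper does not fix a finite net first. Instead it produces, for every $(x,y)$ off the diagonal, an $f(x,y)$ and an \emph{open} neighborhood $U(x,y)\subset K_0\times K_0$ on which $\varphi(f(x,y),x'-y')\neq 0$, covers $K_0\times K_0-\diag$ by countably many such $U(x_n,y_n)$, and then runs a compactness argument on the fibers of $T_n=(\varphi(f_1,\cdot),\dots,\varphi(f_n,\cdot))$ to get \eqref{eq2}. The countable cover and the contradiction argument take care of exactly the uniformity your net argument is missing.

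Second, the ``lexicographic weighting'' $c_1\gg c_2\gg\cdots\gg c_N$ cannot give $\M_0(f)=K_0^{(N)}$ the way you describe, because for an affine function on a compact convex set there is \emph{no gap} between the minimum value and nearby values: points of $K_0$ just outside the minimizing face $F_1$ of $g_1$ have $\varphi(g_1,\cdot)$ arbitrarily close to $\min_{K_0}\varphi(g_1,\cdot)$, so no choice of $c_1$ can dominate a fixed $c_2 g_2$ there. (The only perturbative statement of this type available in the paper is Lemma \ref{lemma 3.3}, which is a soft ``$\diam\M(f+tg)$ is eventually $\leq\e$'' statement, not a ``$\M(f+tg)\subset F_2$ for small $t$'' statement, and it is used for the \emph{density} step, not here.) The paper circumvents the combination problem entirely by applying Straszewicz's theorem to the compact convex set $T_n(K_0)\subset\R^n$: an exposed point $p$ comes with a single linear functional $v\in(\R^n)^*$ whose unique minimizer over $T_n(K_0)$ is $p$, and then $f:=\sum v_i f_i$ has $\M_0(f)=T_n^{-1}(p)$, a single fiber of diameter $\leq\e$ by \eqref{eq2}. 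No iteration or weighting is needed.

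Finally, a minor point: the statement you ``expect,'' that any two disjoint closed (not convex!) subsets $A,B\subset K_0$ can be strictly separated by some $\varphi(g,\cdot)$, is not a consequence of the hypothesis, which only asserts that $\varphi$ distinguishes \emph{points}. The proof does not need set-separation; pointwise distinctness plus the open-neighborhood trick suffices.
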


\begin{proof}
For $x\neq y$ in $K_0$ let $f(x,y)\in E$ such that $\varphi(f(x,y),x-y)\neq 0$ and consider open neighborhoods $U(x,y)\subset K_0\times K_0$ of $(x,y)$, such that $\varphi(f(x,y),x'-y')\neq 0$ for all $(x',y')\in U(x,y)$. We find a sequence $U(x_n,y_n)$ covering $K_0\times K_0-\diag$.\footnote{$K_0\times K_0$ is a compact metric space. In general, let $(M,d)$ be a compact metric space and $D\subset M$ a closed subset, so $x\in D$ iff $d(x,D)=0$. Suppose that for all $x\in M-D$ we are given an open neighborhood $U_x\subset M-D$ of $x$ and for $n\in \N$ consider the open cover $\U_n := \{ U_x\}_{x\in M-D} \cup \{ B_d(x,1/n)\}_{x\in D}$ of $M$. Then $\U_n$ has a finite subcover and with $n\to\infty$, we obtain a countable subcover of $M-D$ of $\{U_x\}_{x\in M-D}$ by taking for each $n\in\N$ finitely many $U_x, x\in M-D$.} Setting $f_n := f(x_n,y_n)$, we find
\begin{align}\label{eq1}
\forall x,y\in K_0, x\neq y ~ \exists n\in \N : \quad \varphi(f_n,x-y)\neq 0.
\end{align}
For $n\in\N$ consider the linear map
\[ T_n : K_0 \to \R^n, \quad T_n(x) := (\varphi(f_1,x), ... , \varphi(f_n,x)). \]
Fix $\e>0$. We claim that
\begin{align}\label{eq2}
\exists n\in\N ~  \forall p \in \R^n: \quad \diam T_n^{-1}(p)\leq \e.
\end{align}

Proof of \eqref{eq2}: Suppose that for all $n\in \N$ we find $p_n\in \R^n$, such that $\diam T_n^{-1} (p_n) > \e$. Then we find a sequences $x_n,y_n\in K_0$, such that $T_n(x_n)=T_n(y_n)=p_n$ and $d(x_n,y_n)\geq \e$. Choosing convergent subsequences of $x_n,y_n$ by the compactness of $K_0$, we find limits $x,y\in K_0$ with $d(x,y)\geq \e$. By \eqref{eq1}, we find then $i\in\N$, such that $\varphi(f_i,x-y) \neq 0$ and by continuity of $\varphi$, we have for large $n\geq i$ (using the euclidean distance in $\R^n$)
\[ 0 = d(T_n(x_n),T_n(y_n)) \geq |\varphi(f_i,x_n-y_n)| \geq |\varphi(f_i,x-y)|/2 > 0, \]
a contradiction.

We fix $n$, such that \eqref{eq2} holds. Let $p\in T_n(K_0)$ be an exposed point, i.e. there exists $v\in (\R^n)^*$, such that $v|_{T_n(K_0)}$ attains its unique minimum at $p$ (such points exist by Straszewicz's theorem, since $T_n(K_0)\subset \R^n$ is convex and compact and hence has extremal points).
Writing $v = \sum_{i=1}^n v_i \cdot dx_i$ and using the linearity of $\varphi$, we obtain
\[ v\circ T_n = \varphi(f,.)|_{K_0} , \quad \text{where } f := \sum_{i=1}^n v_i \cdot f_i . \]
By definition of $v$ and $\M_0(f)$, we find
\[ \M_0(f) = T_n^{-1}(p). \]
The claim follows using \eqref{eq2}.
\end{proof}

\begin{lemma}\label{lemma 3.3}
If $f\in E$, then
\[ \forall ~ \e>0 , ~ U \subset E \text{ neighborhood of } f, ~ \exists f_*\in U : \quad \diam\M(f_*)\leq \e. \]
\end{lemma}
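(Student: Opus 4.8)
The plan is to deduce Lemma \ref{lemma 3.3} from Lemma \ref{lemma 3.2} by a localization argument: given $f\in E$ and a neighborhood $U$ of $f$, I want to find a small, closed, convex set $K_0\subset K$ on which the perturbation supplied by Lemma \ref{lemma 3.2} can be carried out \emph{without leaving $U$}, and then check that the resulting global minimizer set $\M(f_*)$ coincides with the local one $\M_0(f_*)$.

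First I would observe that $\M(f)$ is itself a closed convex subset of $K$ (it is the face of $K$ where the affine function $\varphi(f,\cdot)$ attains its minimum $m(f)$, and it is closed by Lemma \ref{M(f) semi-cont} applied to the constant sequence, or directly by continuity of $\varphi$). Pick a closed convex neighborhood $K_0$ of $\M(f)$ in $K$; since $K$ is compact metric, $\M(f)$ compact, and $K$ convex, one can take for instance a suitable intersection of $K$ with a closed convex set, or simply note that the values $\varphi(f,\cdot)$ slightly above $m(f)$ cut out such a neighborhood — concretely $K_0 := \{x\in K : \varphi(f,x) \le m(f)+\delta\}$ is closed and convex for every $\delta>0$, and $K_0 \searrow \M(f)$ as $\delta\to 0$. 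Apply Lemma \ref{lemma 3.2} to this $K_0$: there is $g\in E$ with $\diam\M_0(g)\le\e$, where $\M_0(g)=\{x\in K_0 : \varphi(g,x)=\min\varphi(g,\cdot)|_{K_0}\}$.

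Next I would form the perturbation $f_* := f + tg$ for small $t>0$ and argue that for $t$ small enough (i) $f_*\in U$, which holds since $E$ is a topological vector space and $tg\to 0$; (ii) the global minimum of $\varphi(f_*,\cdot)$ over all of $K$ is attained inside $\inte K_0$, so $\M(f_*)\subset K_0$; and (iii) on $K_0$, for small $t$, $\M(f_*)=\M_0(f_* )\subset\M_0(g)$-like set, forcing $\diam\M(f_*)\le\e$. For (ii): on the compact set $K\setminus \inte K_0$ we have $\varphi(f,\cdot)\ge m(f)+\delta'$ for some $\delta'>0$ (shrinking if necessary; if $K_0$ was defined by the level $\delta$ then take $\delta'$ slightly less than $\delta$), while near $\M(f)$ we have $\varphi(f,\cdot)$ close to $m(f)$; since $\varphi(g,\cdot)$ is bounded on the compact metrizable $K$ (continuity of $\varphi(g,\cdot)$), for $t$ small the term $t\varphi(g,\cdot)$ is a uniformly small perturbation, so $\varphi(f_*,\cdot)$ is still at least $m(f)+\delta'/2$ on $K\setminus\inte K_0$ and at most $m(f)+\e'$ somewhere in $\M(f)\subset\inte K_0$ with $\e'<\delta'/2$; hence every global minimizer lies in $\inte K_0$. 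Then $\M(f_*)$ equals the minimizer set of $\varphi(f_*,\cdot)$ restricted to $K_0$. Finally, since the minimizer of $\varphi(f,\cdot)|_{K_0}$ is the whole face $\M(f)$ (a constant value), the strict minimizer behaviour of the restricted problem is governed by the $g$-term; more carefully, any $x\in\M(f_*)\cap K_0$ minimizes $\varphi(f,x)+t\varphi(g,x)$ over $K_0$, and comparing with points of $\M(f)$ shows $\varphi(f,x)$ is within $O(t)$ of $m(f)$ and $\varphi(g,x)$ is within $O(1)$ of $\min\varphi(g,\cdot)|_{K_0}$ — so I would instead restrict attention to the even smaller convex set $K_1 := \{x\in K_0:\varphi(f,x)\le m(f)+\eta\}$ for suitable $\eta=\eta(t)$, apply Lemma \ref{lemma 3.2} to $K_1$ in place of $K_0$, and iterate the level-set trick once more.

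The main obstacle I expect is exactly this last coupling: a single perturbation $f+tg$ need not make $\M(f_*)$ land inside the set $\M_0(g)$ where the diameter is controlled, because the $f$-term still influences which points of $K_0$ minimize $\varphi(f_*,\cdot)$. The clean fix is a two-parameter perturbation or an inductive shrinking: choose $g$ first for the set $K_0=\{\varphi(f,\cdot)\le m(f)+\delta\}$, then note $\M(f+tg)\subset K_0$ for small $t$ and moreover $\M(f+tg)\subset \{x\in K_0:\varphi(f,x)\le m(f)+\delta\}$ automatically, and within $K_0$ the minimizers of $\varphi(f+tg,\cdot)$ converge (as $t\to0$, by Lemma \ref{M(f) semi-cont} applied to the family on $K_0$) to the minimizers of $\varphi(g,\cdot)|_{\M(f)}\subset\M_0(g)$ — wait, this again needs care — so in the write-up I would present it as: apply Lemma \ref{lemma 3.2} to $K_0=\M(f)$ itself (which is closed and convex!) to get $g$ with $\diam\{x\in\M(f):\varphi(g,x)=\min\varphi(g,\cdot)|_{\M(f)}\}\le\e/2$; then $f_*:=f+tg$ for small $t$ has $\M(f_*)$ contained in an $(\e/2)$-neighborhood of that set (by an upper-semicontinuity / compactness argument as $t\to0$, using continuity of $\varphi$ and boundedness of $\varphi(g,\cdot)$ on $K$), hence $\diam\M(f_*)\le\e$, while $f_*\in U$ for $t$ small. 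This is the argument I would flesh out, with the convergence-of-minimizers step being the only point requiring a genuine (but short) compactness argument.
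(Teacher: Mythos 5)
Your final version (apply Lemma \ref{lemma 3.2} directly to $K_0=\M(f)$, set $f_*=f+tg$, and show that $\M(f_*)$ accumulates in $\M_0(g)$) is exactly the approach the paper takes, and you were right to discard the sublevel-set detour. But the step you yourself flag --- why the minimizers of $f+tg$ land near $\M_0(g)$ rather than merely near $\M(f)$ --- is where a genuine gap remains, and it is not closed by the tools you name. Lemma \ref{M(f) semi-cont} and compactness give only that limit points of $\M(f+tg)$ as $t\to 0$ lie in $\M(f)$; they say nothing about where inside $\M(f)$ they lie. Boundedness of $\varphi(g,\cdot)$ on $K$ does not help either: the perturbation $tg$ is uniformly $O(t)$, so in a naive limiting argument the $g$-term simply disappears.

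The missing ingredient is an algebraic comparison, not a topological one. Testing $m(f+tg)$ against any $x_0\in\M_0(g)\subset\M(f)$ gives $m(f+tg)\le\varphi(f+tg,x_0)=m(f)+t\,m_0(g)$, where $m_0(g)=\min\varphi(g,\cdot)|_{\M(f)}$. Then for any $x\in\M(f+tg)$,
\[
t\,\varphi(g,x)=\varphi(f+tg,x)-\varphi(f,x)\le m(f+tg)-m(f)\le t\,m_0(g),
\]
so for $t>0$ one obtains $\varphi(g,\cdot)|_{\M(f+tg)}\le m_0(g)$. Now Lemma \ref{M(f) semi-cont} pins any limit point $x$ of $\M(f+t g)$, $t\to 0^+$, into $\M(f)$, while the displayed inequality and continuity of $\varphi(g,\cdot)$ give $\varphi(g,x)\le m_0(g)$, hence $x\in\M_0(g)$. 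This one-line inequality is the heart of the lemma; without it the convergence-of-minimizers step you describe as a short compactness argument does not actually go through.
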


\begin{proof}
Fix $f\in E$ and $\e>0$. The set $K_0:=\M(f)\subset K$ is closed and convex, so by Lemma \ref{lemma 3.2}, we find $g\in E$, such that $m_0(g) := \min \varphi(g,.)|_{\M(f_0)}$ is attained in a set
\begin{align}\label{M_0 small}
 \M_0(g) \subset \M(f) ,\quad \diam \M_0(g)\leq \e/2.
\end{align}
We want to prove that
\begin{align}\label{proves lemma}
\lim_{t \to 0} ~ \diam \M(f+tg) \leq \e,
\end{align}
then the lemma follows. Observe that for $x\in \M_0(g)\subset \M(f)$ we have
\begin{align}\label{m(a) bound}
 m(f+tg) \leq \varphi(f+tg,x) = m(f)+t\cdot m_0(g).
\end{align}
For $x\in \M(f+tg)$ it follows, that
\begin{align*}
t\cdot \varphi(g,x) & = \underset{=m(f+tg)}{\underbrace{\varphi(f+tg,x)}} - \underset{\geq m(f)}{\underbrace{\varphi(f,x)}} \leq m(f+tg) - m(f) \\
& \stackrel{\eqref{m(a) bound}}{\leq} m(f)+t\cdot m_0(g) - m(f) = t\cdot m_0(g)
\end{align*}
and hence for $t>0$
\begin{align}\label{eq g}
\varphi(g,.)|_{\M(f+tg)} \leq m_0(g) .
\end{align}
Let now $x_t,y_t\in\M(f+tg)$, then by Lemma \ref{M(f) semi-cont}, any pair of limit points $x,y$, as $t\to 0$, belongs to $\M(f)$. Moreover, \eqref{eq g} shows that $\varphi(g,x), \varphi(g,y)\leq m_0(g)$, i.e. $x,y\in \M_0(g)$, hence $d(x,y)\leq \e/2$ by \eqref{M_0 small} and for $t>0$ small we have $d(x_t,y_t)\leq \e$. Hence, \eqref{proves lemma} follows.
\end{proof}

\section{Measures on free loops}\label{section free loops}

We return to the setting described in the introduction and begin with some notation. Let $|.|$ be the norm of a fixed Riemannian metric $g$ on the closed manifold $M$. For $b>0$ denote by $\M_b$ the set of probabilities supported in the compact set
\[ T_{\leq b}M := \{v\in TM :|v|\leq b \}. \]
Then $\M_b$ can be identified with a subset of the dual space $C^0(T_{\leq b}M)^*$. We endow $\M_b$ with the topology of weak*-convergence, i.e.
\[ \mu_n\to\mu \quad \iff \quad \int fd\mu_n\to \int fd\mu \quad \forall f \in C^0(T_{\leq b}M). \]
It is well-known that $\M_b$ is convex, metrizable and compact with respect to this topology.

\begin{defn}
Fix $\gam\in\Gam(M)$ and $b>0$. If $c:\R/\Z\to M$ is piecewise $C^1$ with $|\dot c|\leq b$, we define $\mu_c\in\M_b\subset C^0(T_{\leq b}M)^*$ by
\[ \int f d\mu_c =  \int_0^1 f(\dot c) dt \quad \forall f\in C^0(T_{\leq b}M). \]
and define $K_\gam^b\subset \M_b$ to be the closure of the convex hull of 
\begin{align*}
\big\{ \mu_c\in \M_b ~|~ c : \R/\Z \to M \text{ piecewise $C^1$, $c \in \gam$, $|\dot c|\leq b$} \big\} .
\end{align*}
\end{defn}

By the compactness and convexity of $\M_b$, the sets $K_\gam^b$ are always compact and convex. We will show that for large $b$, given the Finsler metric $F$, we always find particular minimizers of the action
\[ A_F : K_\gam^b \to \R, \quad A_F(\mu) := \int F^2 d\mu, \]
which are defined by shortest geodesic loops in $\gam$. Note that, since $F$ is continuous, $A_F$ is continuous by definition of the weak*-topology. Obviously, $A_F$ is also linear.

Using the compactness of $M$, we denote by $c_F\geq 1$ some constant, such that
\[ \frac{1}{c_F} \cdot F \leq |.| \leq c_F\cdot F \]
and writing $l_F, l_g$ for the lengths with respect to $F,g$, respectively, we define
\[ C_0(F,\gam) := c_F^2 \cdot \min_{c\in\gam} l_g(c) . \]

\begin{lemma}\label{min geod = min meas}
If $c:\R/\Z\to M$ is an $F$-geodesic loop, minimal with respect to $l_F$ in $\gam\in\Gam(M)$, then for all $b\geq C_0(F,\gam)$ the measure $\mu_c$ lies in $K_\gam^b$ and $\mu_c$ is minimal in $K_\gamma^b$ with respect to the action $A_F:K_\gam^b \to \R$.
\end{lemma}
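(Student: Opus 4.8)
The plan is to verify two things: first, that $\mu_c \in K_\gam^b$ whenever $b \geq C_0(F,\gam)$, and second, that $\mu_c$ realizes the minimum of $A_F$ over $K_\gam^b$. For the first point, I would estimate the speed of a length-minimizing $F$-geodesic loop $c$ in $\gam$. Since $c$ is parametrized with $F(\dot c) = \const = l_F(c)$, and using $|.| \leq c_F \cdot F$, we get $|\dot c| \leq c_F \cdot l_F(c)$. Now $l_F(c) = \min_{c' \in \gam} l_F(c') \leq c_F \cdot \min_{c' \in \gam} l_g(c')$, because any $g$-length can be bounded by $c_F$ times the $F$-length of the same curve (from $F \leq c_F |.|$). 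Hence $|\dot c| \leq c_F^2 \cdot \min_{c' \in \gam} l_g(c') = C_0(F,\gam) \leq b$, so $c$ is an admissible loop for the definition of $K_\gam^b$, and therefore $\mu_c \in K_\gam^b$ by definition (it is even one of the generating measures, before taking convex hull and closure).

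For the second point — that $\mu_c$ minimizes $A_F(\mu) = \int F^2 \, d\mu$ over $K_\gam^b$ — the key is a Cauchy–Schwarz / Jensen-type inequality relating $\int F^2 \, d\mu$ to $(\int F \, d\mu)^2$, combined with the fact that $\int F \, d\mu$ is bounded below on loops in $\gam$ by the minimal $F$-length. Concretely, for any admissible loop $c' \in \gam$ with $|\dot c'| \leq b$, by Cauchy–Schwarz on $[0,1]$ we have $\int_0^1 F(\dot c')^2 \, dt \geq \left(\int_0^1 F(\dot c')\, dt\right)^2 = l_F(c')^2 \geq l_F(c)^2$. Since $c$ has constant speed, $A_F(\mu_c) = \int_0^1 F(\dot c)^2\, dt = l_F(c)^2$. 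Thus $A_F(\mu_{c'}) \geq A_F(\mu_c)$ for every generating measure $\mu_{c'}$. Because $A_F$ is linear (hence affine) and continuous on the compact convex set $K_\gam^b$, its minimum over $K_\gam^b$ is attained at an extreme point, and the extreme points lie in the closure of the set of generating measures $\{\mu_{c'}\}$; passing to the closure and using continuity of $A_F$ preserves the bound $A_F \geq l_F(c)^2$. Therefore $\min_{K_\gam^b} A_F = l_F(c)^2 = A_F(\mu_c)$, i.e. $\mu_c$ is minimal.

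I expect the main obstacle to be the second point, and specifically the step of pushing the inequality $A_F(\mu_{c'}) \geq l_F(c)^2$ from generating measures to all of $K_\gam^b$. One must be careful that ``$c' \in \gam$'' is a homotopy condition, so a convex combination $\sum \lambda_i \mu_{c_i'}$ need not be of the form $\mu_{c'}$ for any single loop $c'$; the right way around this is precisely the affineness of $A_F$ (a convex combination of values each $\geq l_F(c)^2$ is again $\geq l_F(c)^2$) together with weak*-continuity to handle the closure. It is worth noting that the argument does not require the minimum of $A_F$ over $K_\gam^b$ to be attained only at $\mu_c$ — uniqueness is not claimed here and is exactly what the genericity argument of Section~\ref{appendix A} will later supply.
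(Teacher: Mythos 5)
Your proof is correct and follows essentially the same route as the paper: membership of $\mu_c$ in $K_\gam^b$ via the speed bound $|\dot c|\leq C_0(F,\gam)$, and minimality via the $L^2$ Cauchy--Schwarz inequality $l_F(\alpha)^2\leq A_F(\mu_\alpha)$ (with equality for constant-speed parametrizations) combined with linearity and continuity of $A_F$ to pass from the generating measures to the closed convex hull. One small slip in the prose: the bound $\min_\gam l_F\leq c_F\min_\gam l_g$ comes from $l_F(\alpha)\leq c_F\,l_g(\alpha)$ (a consequence of $F\leq c_F|.|$, applied to the $g$-minimizer), not from ``$g$-length $\leq c_F\cdot F$-length'' as you wrote; the parenthetical ``(from $F\leq c_F|.|$)'' is the right reason, and the computation itself is fine.
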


\begin{proof}
Let $c_0:\R/\Z\to M$ be a $g$-geodesic loop, minimal in $\gam$ with respect to $l_g$. If $c:\R/\Z\to M$ is minimal in $\gam$ with respect to $l_F$, using $l_g(c_0)=|\dot c_0|, l_F(c)=F(\dot c)$, we find
\begin{align*}
 |\dot c| & \leq c_F\cdot F(\dot c) \leq c_F\cdot \max F(\dot c_0) \leq c_F^2\cdot |\dot c_0| = c_F^2\cdot \min_{c\in\gam} l_g(c) = C_0(F,\gam).
\end{align*}
We obtain $\mu_c\in K_\gam^b$ for $b\geq C_0(F,\gam)$. The $L^2$-Cauchy-Schwarz inequality shows for any piecewise $C^1$ curve $\al:[0,1]\to M$, that
\begin{align*}
 l_F(\al[0,1])^2 & = \la F(\dot \al), 1\ra_{L^2[0,1]}^2 \leq \|F(\dot \al)\|_{L^2[0,1]}^2 \cdot \|1\|_{L^2[0,1]}^2  = \int_0^1 F^2(\dot \al) dt \\
& = A_F(\mu_\al) 
\end{align*}
with equality if and only if the piecewise $C^0$-functions $1, F(\dot \al):[0,1]\to \R$ are linearly dependent. Hence
\begin{align}\label{CS}
l_F(\al[0,1])^2 \leq ~ A_F(\mu_\al) , \quad \text{ equality} \iff \exists k \in \R : F(\dot \al)=k \text{ a.e.}
\end{align}
and we find for any piecewise $C^1$ loop $c':\R/\Z\to M$ in $\gam$ with $|\dot c'|\leq b$, using $F(\dot c)=\const$ and the minimality of $c$ in $\gam$, that
\begin{align*}
 A_F(\mu_c) & = l_F(c )^2 \leq l_F(c')^2 \stackrel{\eqref{CS}}{\leq} A_F(\mu_{c'}) .
\end{align*}
Using the continuity and linearity of $A_F$, we find
\[ A_F(\mu_c) \leq A_F(\mu) \qquad \forall \mu\in K_\gam^b. \]
\end{proof}

In order to apply Theorem \ref{thm mane} to our situation, we introduce the relevant notation. Recall $E = C^\infty(M)$ and define a metric $d$ on $E$ by
\[ d(f,g) = \sum_{k \geq 0} \frac{1}{2^k}\frac{\|f-g\|_k}{1+\|f-g\|_k}, \]
where $\|.\|_k$ denotes the $C^k$-norm. This metric generates the $C^\infty$-topology on $E$. In particular, $E$ is a Hausdorff topological vector space and a set $U\subset E$ is open if for all $f\in U$ we find an open Ball $B_d(f,\e)\subset U$. Let $\phi:E\to \R$ be upper semi-continuous in the sense that $\limsup\phi(f_n) \leq \phi(f)$, if $f_n\to f$. We claim that the sets $\{f\in E : \phi(f)<c\}$ are open in $E$. Suppose the contrary, i.e. there exists $f\in E$ with $\phi(f)<c$ and for all $n\in\N$ there exists $f_n \in B_d(f,1/n)$, such that $\phi(f_n)\geq c$. But then $c\leq \limsup\phi(f_n) \leq \phi(f) < c$, contradiction.

Let
\[ \MM := \{ \mu \text{ positive, finite Borel measure on $M$}\}\subset C^0(M)^* , \]
which is metrizable, when endowed with the weak*-topology (cf. e.g. Theorem 8.3.2 in \cite{Bogachev}). Similar to the setting in \cite{bercont}, writing $\pi:TM\to M$ for the canonical projection, we define a projection
\begin{align*}
& \pi_*^F: \cup_{b>0}\M_b  \to \MM , \quad  \int f d(\pi_*^F\mu) := \int (f\circ \pi) \cdot F^2 d\mu \quad \forall f\in C^0(M).
\end{align*}
The map $\pi_*^F$ is linear and continuous and hence the set
\[ K := \pi_*^F(K_\gam^b) \subset V:= \Span(K) \subset C^0(M)^*  \]
is convex, compact and metrizable in $\MM$, suppressing $\gam,b$ in the notation for the moment. We define a bilinear, continuous map by integration:
\[ \varphi : E \times K \to \R , \quad \varphi(f ,\mu) := \int f d\mu . \]
Then $\varphi$ separates $K \subset C^0(M)^*$, since $E$ is dense in $C^0(M)$. Write
\begin{align*}
 \M_\gam^b(f) & := \{ \mu \in K  : \varphi(f,\mu) = \min \varphi(f, . )|_K \}.
\end{align*}
Theorem \ref{thm mane} can then be restated in our situation as follows.

\begin{cor}\label{prop 3.1}
In the setting described above, assuming $K_\gam^b\neq \emptyset$, there exists a residual set $\O_\gam^b\subset E$, such that
\[ f\in\O_\gam^b \quad \implies \quad \card \M_\gam^b(f) = 1. \]
\end{cor}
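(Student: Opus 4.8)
The plan is to verify that our concrete setup $(E, V, K, \varphi)$ meets all three hypotheses of Theorem \ref{thm mane}, after which the conclusion $\card\M_\gam^b(f)=1$ for $f$ in a residual set $\O_\gam^b\subset E$ is immediate, since $\M_\gam^b(f)$ is exactly the set $\M(f)$ attached to this data.

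\begin{proof}
We apply Theorem \ref{thm mane} with $E = C^\infty(M)$, with $V = \Span(K)\subset C^0(M)^*$, and with $\varphi(f,\mu)=\int f\,d\mu$. It remains to check the three structural hypotheses. First, the discussion preceding the statement shows that $E$, equipped with the metric $d$ generating the $C^\infty$-topology, is a Hausdorff topological vector space with the required property: if $\phi:E\to\R$ satisfies $\limsup\phi(f_n)\le\phi(f)$ whenever $f_n\to f$, then each sublevel $\{\phi<c\}$ is open, the argument being the short contradiction already spelled out there. Second, $V$ is a topological vector space (as a subspace of $C^0(M)^*$ with the weak*-topology) and $K=\pi_*^F(K_\gam^b)$ is non-empty by the hypothesis $K_\gam^b\neq\emptyset$, is convex and compact as the image of the convex compact set $K_\gam^b$ under the linear continuous map $\pi_*^F$, and is metrizable because $\MM$ is metrizable in the weak*-topology (Theorem 8.3.2 in \cite{Bogachev}). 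Third, $\varphi$ is bilinear and, by definition of the weak*-topology on $K$ and the fact that each $f\in E$ lies in $C^0(M)$, it is sequentially continuous; it separates points of $K$ because distinct positive finite Borel measures on $M$ are distinguished by some continuous function, and $E=C^\infty(M)$ is dense in $C^0(M)$, so one may take the separating function in $E$.

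With all hypotheses verified, Theorem \ref{thm mane} produces a residual set $\O_\gam^b\subset E$ such that $\card\M(f)=1$ for every $f\in\O_\gam^b$, where $\M(f)=\{\mu\in K:\varphi(f,\mu)=\min\varphi(f,\cdot)|_K\}$. By construction this set $\M(f)$ coincides with $\M_\gam^b(f)$, which proves the corollary.
\end{proof}

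The only point requiring genuine care is the separation property: one must ensure the separating functional can be chosen in $C^\infty(M)$ rather than merely in $C^0(M)$, which is where the density of $E$ in $C^0(M)$ enters. Everything else is a matter of unwinding definitions and transporting convexity, compactness, continuity and metrizability across the linear map $\pi_*^F$, so I do not expect a substantive obstacle there.
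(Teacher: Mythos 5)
Your proof is correct and matches the paper's (implicit) argument: the paper states the corollary as an immediate restatement of Theorem \ref{thm mane}, having already verified in the preceding paragraphs exactly the same three hypotheses you check — the sublevel property of $(E,d)$, the compactness, convexity and metrizability of $K = \pi_*^F(K_\gam^b)$, and the separation of $K$ via density of $E$ in $C^0(M)$. You have simply made that reasoning explicit, which is fine; there is no substantive difference.
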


Using this result, we can prove our main theorem.

\begin{proof}[Proof of Theorem \ref{main thm}]
Note that for $b\geq C_0(F,\gam)=c_F^2\cdot \min_\gam l_g$ we have $K_\gam^b\neq\emptyset$ by Lemma \ref{min geod = min meas}. Recall $E_+ = \{\lam\in E: \lam(x)>0 ~ \forall x\in M\}$. For fixed $\gam\in\Gam(M)$, using the residual sets $\O_\gam^b$ from Corollary \ref{prop 3.1}, we consider the residual subset of $E_+$ defined by
\[ \O_\gam :=  \bigcap_{b\in \N, ~ b\geq C_0(F,\gam)} \O_\gam^b \cap E_+ . \]
Let $\lam\in \O_\gam$ and assume that there are two shortest $\sqrt\lam F$-geodesic loops $c_0,c_1:\R/\Z\to M$ in $\gam$. We choose
\[ b\in\N, \quad b\geq \max\{ C_0(\sqrt\lam F,\gam),C_0(F,\gam)\}, \]
then by definition of $\O_\gam$ we have $\lam\in \O_\gam^b$. By Lemma \ref{min geod = min meas}, the two measures $\mu_{c_0}, \mu_{c_1}$ evenly distributed on $\dot c_0,\dot c_1$ belong to $K_\gam^b$ and minimize the action $A_{\sqrt\lam F}:K_\gam^b\to \R$. Hence, if $\mu = \pi_*^F\hat \mu \in K$ for some $\hat \mu\in K_\gam^b$, we find
\begin{align*}
 \varphi(\lam, \pi_*^F\mu_{c_i}) & = \int \lam d(\pi_*^F\mu_{c_i}) = \int (\lam \circ \pi) \cdot F^2 d\mu_{c_i} = A_{\sqrt\lam F}(\mu_{c_i}) \\
& \leq A_{\sqrt\lam F}(\hat\mu) = \varphi(\lam,\pi_*^F\hat \mu) = \varphi(\lam, \mu) .
\end{align*}
This shows that  both $\pi_*^F\mu_{c_i}$ minimize $\varphi(\lam,.)|_K$ and, by definition of $\lam\in\O_\gam^b$, we have $\pi_*^F\mu_{c_0}=\pi_*^F\mu_{c_1}$. We obtain
\[ \int f d(\pi_*^F\mu_{c_0})= \int f d(\pi_*^F\mu_{c_1}) \quad \forall f\in C^0(M) , \]
showing that for all $f\in C^0(M)$ we have
\[ \int_0^1 (f\circ c_0) \cdot F(\dot c_0)^2 dt > 0 \iff  \int_0^1 (f\circ c_1) \cdot F(\dot c_1)^2 dt > 0 . \]
By considering bump functions $f:M\to [0,\infty)$ and using $F(\dot c_i)>0$ we obtain $c_0[0,1]=c_1[0,1]$. Hence, the $c_i$ are reparametrizations of each other. Finally, we set
\[ \O := \bigcap_{\gam\in\Gam(M)} \O_\gam , \]
which is residual in $E_+$, since $\Gam(M)$ is countable by assumption. The theorem follows.
\end{proof}


\begin{thebibliography}{99}

\bibitem[BCS00]{bao} D. Bao, S.-S. Chern, Z. Shen -- \emph{An introduction to Riemann-Finsler geometry}. Graduate Texts in Mathematics 200, Springer Verlag (2000).

\bibitem[BC08]{bercont} P. Bernard, G. Contreras -- \emph{A generic property of families of Lagrangian systems}. Annals of Mathematics 167 (2008), 1099-1108.

\bibitem[Bog07]{Bogachev} V. I. Bogachev -- \emph{Measure Theory, volume II}. Springer Verlag (2007).

\bibitem[CIPP98]{contreras} G. Contreras, R. Iturriaga, G. P. Paternain, M. Paternain -- \emph{Lagrangian graphs, minimizing measures and \mane's critical values}. Geometric and Functional Analysis 8 (1998), 788-809.


\bibitem[Ma\~n96]{mane} R. \Mane -- \emph{Generic properties and problems of minimizing measures of Lagrangian systems}. Nonlinearity 9 (1996), 273-310.


\end{thebibliography}
\end{document}